\newcommand{\IG}{\mathrm{IG}}
\newcommand{\QH}{\mathrm{QH}}
\DeclareFontFamily{OT1}{rsfs}{}
\DeclareFontShape{OT1}{rsfs}{n}{it}{<-> rsfs10}{}
\DeclareMathAlphabet{\mathscr}{OT1}{rsfs}{n}{it}
\newtheorem{thm}{Theorem}[section]
\newtheorem{lemma}[thm]{Lemma}
\newtheorem{prop}[thm]{Proposition}
\newtheorem{conj}{Conjecture}
\theoremstyle{definition} \newtheorem{defn}[thm]{Definition}}
\theoremstyle{remark} \newtheorem{remark}[thm]{Remark}
\begin{document}

\title[]{Positivity determines the quantum cohomology of the odd symplectic Grassmannian of lines}



\author{Ryan M. Shifler}

\address{
Department of Mathematical Sciences,
Henson Science Hall, 
Salisbury University,
Salisbury, MD 21801
}
\email{rmshifler@salisbury.edu}

\subjclass[2010]{Primary 14N35; Secondary 14N15, 14M15}

\begin{abstract}
Let $\IG:=\IG(2,2n+1)$ denote the odd symplectic Grassmannian of lines which is a horospherical variety of Picard rank 1. The quantum cohomology ring $\QH^*(\IG)$ has negative structure constants. For $n\geq 3$, we give a positivity condition that implies the quantum cohomology ring $\QH^*(\IG)$ is the only quantum deformation of the cohomology ring $\mbox{H}^*(\IG)$ up to the scaling of the quantum parameter. This is a modification of a conjecture by Fulton.
\end{abstract}

\maketitle


%
%

\section{Introduction}

Let $\IG:=\IG(2,2n+1)$ denote the odd symplectic Grassmannian of lines which is a horospherical variety of Picard rank 1. This is the parameterization of two dimensional subspaces of $\mathbb{C}^{2n+1}$ that are isotropic with respect to a general skew-symmetric form. The quantum cohomology ring $(\QH^*(\IG),\star)$ is a graded algebra over $\mathbb{Z}[q]$ where $q$ is the quantum parameter and $\deg q = 2n$. The ring has a Schubert basis given by $\{\tau_\lambda: \lambda \in \Lambda \}$ where \[\Lambda:=\{(\lambda_1,\lambda_2): 2n-1 \geq \lambda_1 \geq \lambda_2 \geq -1,\mbox{ }  \lambda_1>n-2 \Rightarrow \lambda_1>\lambda_2, \mbox{ and } \lambda_2=-1 \Rightarrow \lambda_1=2n-1 \}.\] We will often write $\tau_i$ in place of $\tau_{(i,0)}$. We define $|\lambda|=\lambda_1+\lambda_2$ for any $\lambda \in \Lambda$. Then $\deg(\tau_\lambda)=|\lambda|$. The ring multiplication is given by $\tau_\lambda \star \tau_\mu=\sum_{\nu,d} c_{\lambda,\mu}^{\nu,d}q^d \tau_\nu$ where $c_{\lambda,\mu}^{\nu,d}$ is the degree $d$ Gromov-Witten invariant of $\tau_\lambda$, $\tau_\mu$, and the Poicar\'e dual of $\tau_\nu$. Unlike the homogeneous $G/P$ case, the Gromov-Witten invariants may be negative. For example, in $\IG(2,5)$, we have \[\tau_{(3,-1)}\star\tau_{(3,-1)}=\tau_{(3,1)}-q \mbox{ and } \tau_{(2,1)}\star\tau_{(3,-1)}=-\tau_{(3,2)}+q\tau_1.\] The quantum Pieri rule has only non-negative coefficients and is stated in Proposition \ref{pieri:prop}. See \cite{pech:quantum, mihalcea.shifler:qhodd, GPPS} for more details on $\IG$.

\begin{defn}
For any given collection of constants $\{a_\mu \in \mathbb{Q}: \mu \in \Lambda \}$, a quantum deformation with the corresponding basis $\{ \sigma_\lambda : \lambda \in \Lambda \}$ is defined as a solution to the following system:
\[ \displaystyle \tau_\lambda=\sigma_\lambda+ \sum_{j\geq 1} \left( \sum_{|\mu|+2nj=|\lambda|} a_\mu q^j\sigma_\mu \right), \lambda \in \Lambda.\]
\end{defn}
\begin{remark}
It is always possible to re-scale the quantum parameter $q$ by a positive factor $\alpha>0$, or equivalently, multiply each Gromov-Witten invariant $c_{\lambda,\mu}^{\nu,d}$ by $\alpha^{-d}$. We only consider the $\alpha=1$ case in this manuscript.
\end{remark}

To contextualize the significance of quantum deformations we review the following conjecture by Fulton for Grassmannians and its extension to a more general case by Buch and Wang in \cite[Conjecture 1]{buch.Pos.Grass}.
\begin{conj}
Let $X = G/P$ be any flag variety of simply laced Lie type. Then the Schubert basis of $\QH^*(X)$ is the only homogeneous $\mathbb{Q}[q]$-basis that deforms the Schubert basis of $\mbox{H}^*(X,\mathbb{Q})$ and multiplies with non-negative structure constants.
\end{conj}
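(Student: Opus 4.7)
The plan is to adapt the Buch--Wang strategy from the Grassmannian setting to the general simply-laced $G/P$ case, proceeding by induction on the quantum degree. Since the conjecture is open in full generality, this proposal is speculative, but the structural steps are clear.

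First, I would parameterize a hypothetical deformation by the constants $\{a_\mu\}$ and invert the defining recursion to express each $\sigma_\lambda$ as $\tau_\lambda$ plus a $q$-power series whose coefficients are polynomial in the $a_\mu$. Expanding $\sigma_\lambda \star \sigma_\mu$ back in the $\sigma$-basis then yields deformed structure constants $\widetilde{c}_{\lambda,\mu}^{\nu,d}$ as explicit polynomial expressions in the Gromov--Witten invariants $c_{\lambda,\mu}^{\nu,d}$ and the $a_\mu$. Non-negativity of the $\widetilde{c}_{\lambda,\mu}^{\nu,d}$ is then a system of polynomial inequalities in the $a_\mu$, which is what must be exploited.

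Second, I would work degree-by-degree in $q$. At $q=0$ the problem reduces to the classical Fulton conjecture for $\mbox{H}^*(G/P)$, which is known in the simply-laced case via Knutson--Tao puzzles and their generalizations; this handles the base case. For $j \geq 1$, assume inductively that $a_\mu = 0$ whenever $|\mu| > |\lambda| - 2nj$. The unknown coefficients at level $j$ first manifest themselves in the $q^j$-term of certain products. I would concentrate on products with Schubert divisor classes, where the quantum Chevalley formula is explicit and fully positive. Imposing non-negativity on each $q^j$-coefficient of a Chevalley-type product, together with the commutation constraint $\sigma_\lambda \star \sigma_\mu = \sigma_\mu \star \sigma_\lambda$, should yield enough linear inequalities to force the new $a_\mu$ to vanish.

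The main obstacle is extracting sufficiently restrictive inequalities from Chevalley products alone. For Grassmannians the full Pieri rule provides a rich family of constraints on products $\tau_\lambda \star \tau_r$ for all $r$, but for general $G/P$ only the Chevalley formula ($r=1$) is known in closed form, and by itself it may well be insufficient to pin the deformation down. A complete proof would therefore most likely require either new Pieri-type formulas in simply-laced type, or a more global algebraic argument exploiting that $\QH^*(G/P)$ is a finitely generated $\mathbb{Z}[q]$-algebra with controlled relations (for instance, a presentation in terms of Schubert divisor generators whose quantum relations constrain the allowable deformations directly).
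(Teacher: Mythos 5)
This statement is an open conjecture (due to Buch and Wang, extending Fulton), and the paper does not prove it: it is quoted only for context, with the text noting that it is known for Grassmannians and a handful of other examples, that it fails for $\IG(2,4)$, and that the paper's actual contribution is a \emph{modified} statement for the non-homogeneous space $\IG(2,2n+1)$ (Theorem \ref{main:thm}). Your proposal is, by your own admission, a research program rather than a proof, so there is nothing here that establishes the conjecture. The central gap is exactly the one you flag yourself: for a general simply laced $G/P$ the only closed-form product available is the quantum Chevalley formula, and there is no argument showing that non-negativity of the $q$-coefficients in divisor products, even combined with commutativity, forces the deformation parameters to vanish. The evidence runs the other way: in the known cases the proofs lean on richer multiplicative data (Pieri rules for Grassmannians in Buch--Wang; multiplication by the non-divisor class $\tau_{(1,1)}$ in Li--Li and in this paper), precisely because divisor multiplication alone is too coarse. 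Until that obstacle is overcome, the inductive step has no engine.

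A few smaller points also need repair. The base case at $q^0$ is automatic from the definition of a quantum deformation (the $\sigma$-basis is required to reduce to the Schubert basis at $q=0$), so there is nothing to prove there and no need to invoke classical positivity; moreover Knutson--Tao puzzles do not cover general simply laced $G/P$, only (co)minuscule and some two-step flag cases, so that citation would not carry the weight even if it were needed. Finally, your grading bookkeeping $|\mu|+2nj=|\lambda|$ is specific to the Picard rank one situation of this paper; a general $G/P$ has several quantum parameters $q_1,\dots,q_k$ with distinct degrees, and any honest induction would have to be organized over the multidegree lattice rather than a single integer $j$.
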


This conjecture is shown to hold for any Grassmannian and a few other examples in \cite{buch.Pos.Grass}. Li and Li proved the result for symplectic Grassmannians $\IG(2,2n)$ with $n\geq 3$ in \cite{PosIG}. The condition that the root system of $G$ be simply laced is necessary since the conjecture fails to hold for the Lagrangian Grassmannian $\IG(2,4)$ as shown in \cite[Example 6]{buch.Pos.Grass}. However, this conjecture is not applicable to $\IG(2,2n+1)$ since negative coefficients appear in quantum products for any $n$. We are able to modify the positivity condition on Fulton's conjecture to arrive at a uniqueness result for quantum deformations.

\begin{defn}
For $\IG(2,2n+1)$ we will use (**) to denote the condition that the coefficients of the quantum multiplication of $\sigma_{(1,1)}$ and any $\sigma_\mu$ in the basis $\{\sigma_\lambda: \lambda \in \Lambda \}$ are polynomials in $q$ with non-negative coefficients.
\end{defn}
We are ready to state the main result.
\begin{thm} \label{main:thm}
Let $n\geq 3$. Suppose that $\{\sigma_\lambda: \lambda \in \Lambda \}$ is a quantum deformation of the Schubert basis $\{\tau_\lambda: \lambda \in \Lambda \}$ of $\QH^*(\IG)$ such that Condition (**) holds. Then $\tau_\lambda=\sigma_\lambda$ for all $\lambda \in \Lambda$.
\end{thm}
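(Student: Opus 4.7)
The plan is to proceed by induction on $N \geq 0$, showing that $a_\mu = 0$ for every $\mu \in \Lambda$ with $|\mu| \leq N$. The key preliminary observation is that since $\deg q = 2n$ and every element of $\Lambda$ has non-negative degree, the defining relation for a quantum deformation collapses automatically to $\sigma_\lambda = \tau_\lambda$ whenever $|\lambda| < 2n$. In particular $\sigma_{(1,1)} = \tau_{(1,1)}$, so the quantum Pieri rule of Proposition~\ref{pieri:prop} describes the products $\sigma_{(1,1)} \star \tau_\nu$ directly and with non-negative Schubert structure constants.

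For the inductive step, suppose $a_\mu = 0$ for every $\mu$ with $|\mu| < N$. Then $\sigma_\lambda = \tau_\lambda$ for all $\lambda$ with $|\lambda| < N + 2n$, and for $\lambda$ with $|\lambda| = N + 2n$ the defining relation reads
\[
\sigma_\lambda \;=\; \tau_\lambda \;-\; q \sum_{|\mu|=N} a_\mu\, \tau_\mu \;+\; (\text{higher powers of }q).
\]
Multiplying by $\sigma_{(1,1)} = \tau_{(1,1)}$, expanding both $\tau_{(1,1)}\star\tau_\lambda$ and each $\tau_{(1,1)}\star\tau_\mu$ via the quantum Pieri rule, and then re-expressing the resulting Schubert classes (of degree $|\lambda|+2 = N+2n+2$, which themselves pick up $q$-corrections) in the $\sigma$-basis, one obtains the coefficient of $q\sigma_\kappa$ for each $\kappa$ with $|\kappa|=N+2$ as an explicit linear combination of the unknowns $\{a_\mu : |\mu|=N\}$ together with the non-negative Pieri constants of $\tau_{(1,1)}\star\tau_\lambda$ and $\tau_{(1,1)}\star\tau_\mu$.

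Condition (**) forces every such coefficient to be non-negative for every $\lambda$ of degree $N+2n$ and every $\kappa$ of degree $N+2$. Because the constants $a_\mu$ are \emph{global} (independent of $\lambda$), varying $\lambda$ over all Schubert classes of degree $N+2n$ produces an overdetermined system of linear inequalities. By comparing carefully chosen pairs $(\lambda,\lambda')$ — in particular, using the distinctive odd Schubert classes indexed by $\lambda_2 = -1$, such as $\tau_{(2n-1,-1)}$, which have no counterpart in the even symplectic Grassmannian — the contribution of a single $a_{\mu_0}$ can be isolated and seen to satisfy inequalities of opposing signs, forcing $a_{\mu_0} = 0$. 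Iterating on $N$ yields $\sigma_\lambda = \tau_\lambda$ for all $\lambda \in \Lambda$.

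The principal obstacle lies precisely in this combinatorial separation. For each $\mu$ with $|\mu|=N$, one must exhibit two classes $\lambda,\lambda' \in \Lambda$ of degree $N+2n$ whose Pieri expansions under $\tau_{(1,1)}$ differ in a controlled way, together with a test class $\kappa$ of degree $N+2$ for which the $q\sigma_\kappa$-coefficient decouples $a_\mu$ from all remaining unknowns — including the ``lookahead'' parameters $a_\kappa$ that appear through the change-of-basis corrections on $\tau_\nu$ with $|\nu|=N+2n+2$. Verifying that the explicit quantum Pieri rule of Proposition~\ref{pieri:prop} always admits such a comparison is where the bulk of the technical work lies, and is exactly where the hypothesis $n\geq 3$ is essential: for $n=2$ the Schubert basis of $\IG(2,5)$ is too small to realize these separations, which is consistent with the failure of analogous uniqueness statements in small-rank cases mentioned in the introduction.
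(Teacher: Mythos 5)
Your proposal correctly sets up the framework the paper also uses: by degree reasons only a single power of $q$ can appear in the deformation, $\sigma_\lambda=\tau_\lambda$ whenever $|\lambda|<2n$ (so in particular $\sigma_{(1,1)}=\tau_{(1,1)}$), and lower bounds $a_\mu\ge 0$ come from products such as $\sigma_{(1,1)}\star\sigma_{(\lambda_1-1,\lambda_2-1)}=\tau_\lambda=\sigma_\lambda+\sum a_\mu q\sigma_\mu$ together with (**). But the proposal stops short of a proof at exactly the decisive point. You acknowledge that after a single multiplication by $\sigma_{(1,1)}$ the coefficient of $q\sigma_\kappa$ involves not only the unknowns $a_\mu$ with $|\mu|=N$ but also the ``lookahead'' parameters attached to the degree-$(N+2n+2)$ classes, and you then defer to ``carefully chosen pairs $(\lambda,\lambda')$'' whose existence and effect you never establish. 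That combinatorial separation \emph{is} the content of the theorem; asserting that the contribution of $a_{\mu_0}$ ``can be isolated and seen to satisfy inequalities of opposing signs'' without exhibiting the pairs, and without showing how the coupling to the next degree's unknowns is broken, is a genuine gap rather than a routine verification.

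The paper resolves this coupling problem by a device absent from your sketch: instead of one multiplication by $\sigma_{(1,1)}$, it multiplies $\sigma_\lambda$ by $\prod_{i=1}^{t}\sigma_{(1,1)}$ with $t=2n-\lambda_1$. By the quantum Pieri rule this sends $\tau_\lambda$ entirely into the $q$-part, namely $q\tau_{(\lambda_2+t)}$ (or $q\tau_{(2n-1,-1)}+q\tau_{(2n-2)}$), where the resulting classes have degree $<2n$ and hence carry no corrections; simultaneously each $\bigl(\prod_{i=1}^{t}\tau_{(1,1)}\bigr)\star\tau_\mu$ lands in degree $2n-\lambda_1+\lambda_2\le 2n-1$ and is again correction-free. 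This decouples the system completely and yields $a_\mu\le 0$, or $a_\mu+a_{\mu'}\le 0$ for an explicit companion $\mu'$ arising from the two-term Pieri cases $2t+|\mu|=2n-2,\,2n-1$, directly from (**); combined with $a_\mu\ge 0$ this forces $a_\mu=0$. (Iterated multiplication by $\sigma_{(1,1)}$ is legitimately controlled by (**), since non-negativity of coefficients propagates through repeated products.) To complete your argument you would need either to adopt this higher-power step, or to actually produce your comparison pairs and verify the sign cancellations for every $\mu$ --- including the exceptional Pieri cases $a+b=2n-3,\,2n-4$ and $a=2n-1$ and the classes with second index $-1$ --- which is precisely the work your sketch postpones.
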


\begin{remark}
The methods used in this manuscript are motivated by those of Li and Li in \cite{PosIG}. In particular, multiplication by $\tau_{(1,1)}$, which is not a generator, is all that we use to establish the uniqueness of the quantum deformation.
\end{remark}

In Section \ref{0:sec} we prove the main result for the $|\lambda|<2n$ case, state the quantum Pieri rule, and give identities for later in the paper; in Section \ref{2:sec} we prove the main result for the $|\lambda|=2n$ case; and in Section \ref{3:sec} we prove the main result for the $|\lambda|>2n$ case. Theorem \ref{main:thm} follows from Propositions \ref{1:prop}, \ref{2:prop}, and \ref{4:prop}.

{\em Acknowledgements.} I would like to thank an anonymous referee for identifying a gap in the argument for the $|\lambda|>2n$ case. I would also like to thank Leonardo Mihalcea for a very useful conversation. 

\section{Preliminaries} \label{0:sec}

We begin the section with a proposition that reduces the number of possible quantum deformations that we need to check. This is accomplished by using the grading. The proposition also states our main result for the $|\lambda|<2n$ case.
\begin{prop}\label{1:prop} We have the following results.
\begin{enumerate}
\item  We have that $\tau_\lambda=\sigma_\lambda+ \sum_{|\mu|+2n=|\lambda|} a_{\mu}q\sigma_{\mu}.$
\item If $|\lambda|<2n$ then $\tau_\lambda=\sigma_\lambda$.
\end{enumerate}
\end{prop}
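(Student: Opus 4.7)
The plan is to reduce both claims to straightforward degree computations using the structure of $\Lambda$. Since $\deg q = 2n$ and each $\tau_\lambda$ is homogeneous of degree $|\lambda|$ in the graded ring $\QH^*(\IG)$, the defining equation of a quantum deformation forces each $\sigma_\mu$ to be homogeneous of degree $|\mu|$; this is essentially what the constraint $|\mu|+2nj=|\lambda|$ in the inner sum already encodes. So both parts reduce to controlling the possible values of $|\mu|$ and $|\lambda|$ on $\Lambda$.

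For part (1), I would bound $|\lambda|$ from above on $\Lambda$. From $\lambda_1\leq 2n-1$ together with the implication $\lambda_1>n-2\Rightarrow \lambda_1>\lambda_2$, one gets $\lambda_2\leq 2n-2$ in the extremal case, hence $|\lambda|\leq (2n-1)+(2n-2)=4n-3$. For any $j\geq 2$, an index $\mu$ appearing in the $j$-th inner sum would need $|\mu|=|\lambda|-2nj\leq 4n-3-4n=-3$. But a quick inspection of $\Lambda$ shows $|\mu|\geq 0$ for every $\mu\in\Lambda$: the only $\mu$ with $\mu_2=-1$ is $(2n-1,-1)$, which has $|\mu|=2n-2\geq 0$, and otherwise $\mu_1\geq\mu_2\geq 0$. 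Hence the $j\geq 2$ summands are empty and only $j=1$ can contribute, which is exactly (1).

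For part (2), I would invoke the same lower bound $|\mu|\geq 0$ on $\Lambda$. Combined with (1), if $|\lambda|<2n$ then $|\lambda|-2n<0\leq |\mu|$ for every $\mu\in\Lambda$, so the $j=1$ sum is vacuous and $\tau_\lambda=\sigma_\lambda$.

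There is no real obstacle here; the proposition is a bookkeeping consequence of the grading combined with the explicit minimum and maximum of $|\lambda|$ over $\Lambda$. The only point that requires a moment's care is confirming that the seemingly exceptional index $(2n-1,-1)$ does not produce a negative total degree — which it does not.
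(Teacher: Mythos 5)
Your argument is correct and is essentially the paper's own proof: part (1) follows from the bound $|\lambda|\leq \dim \IG(2,2n+1)=4n-3<2\deg q$ together with $|\mu|\geq 0$ on $\Lambda$, and part (2) is the same grading observation with $|\lambda|-2n<0$. Your write-up just makes explicit the check that $|\mu|\geq 0$ (including the index $(2n-1,-1)$), which the paper leaves implicit.
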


\begin{proof} The first part follows since $|\lambda| \leq \dim (\IG(2,2n+1)=4n-3<4n=2\deg q$ for any $\lambda \in \Lambda$. The second part follows immediately from the grading.
\end{proof}

Next we state the quantum Pieri rule for $\IG(2,2n+1)$.
\begin{prop}\cite[Theorem 1]{pech:quantum} \label{pieri:prop}
The quantum Pieri rule.
\begin{equation*}
\tau_1 \star \tau_{a,b}= \left\{
        \begin{array}{ll}
            \tau_{a+1,b}+\tau_{a,b+1} & \quad \mbox{if }a+b \neq 2n-3 \mbox{ and }a \neq 2n-1,\\
            \tau_{a,b+1}+2\tau_{a+1,b}+\tau_{a+2,b-1} & \quad \mbox{if }a+b=2n-3,\\
            \tau_{2n-1,b+1}+q\tau_b& \quad \mbox{if } a=2n-1 \mbox{ and } 0 \leq b \leq 2n-3,\\
            \tau_{2n-1} & \quad a=2n-1 \mbox{ and } b=-1,\\
            q(\tau_{2n-1,-1}+\tau_{2n-2})& \quad a=2n-1 \mbox{ and } b=2n-2.
            
        \end{array}
    \right.
\end{equation*}
\begin{equation*}
\tau_{1,1} \star \tau_{a,b}= \left\{
        \begin{array}{ll}
            \tau_{a+1,b+1} & \quad \mbox{if }a+b \neq 2n-4,2n-3 \mbox{ and }a \neq 2n-1,\\
            \tau_{a+2,b}+\tau_{a+1,b+1}& \quad \mbox{if }a+b=2n-4 \mbox{ or }2n-3,\\
            q\tau_{b+1}& \quad \mbox{if } a=2n-1 \mbox{ and } b \neq 2n-3,\\
            q(\tau_{2n-1,-1}+\tau_{2n-2})& \quad a=2n-1 \mbox{ and } b=2n-3.
        \end{array}
    \right.
\end{equation*}

\end{prop}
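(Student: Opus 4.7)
The plan is to split the assertion into two independent pieces: the classical Pieri rules for cup product (the terms without $q$) and the degree-one quantum corrections. For the classical rules on $\mbox{H}^*(\IG)$, I would work with explicit Schubert cycles defined via an isotropic flag for the skew form on $\mathbb{C}^{2n+1}$, intersect a general translate of $\tau_1$ (resp.\ $\tau_{1,1}$) with the Schubert variety $X_{a,b}$, and decompose the result into Schubert cycles. The standard Pieri-on-a-Grassmannian argument handles the generic cases; the anomalous factor of $2$ and the extra class $\tau_{a+2,b-1}$ arising when $a+b=2n-3$ should be traced to the fact that $\IG(2,2n+1)$ has two orbits under its automorphism group, so a Schubert variety meeting the boundary orbit splits with multiplicity along that smaller orbit.

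For the quantum part, I first observe that $\dim \IG = 4n-3$ and $\deg q = 2n$, so $|\tau_1| + |\tau_{a,b}|$ and $|\tau_{1,1}| + |\tau_{a,b}|$ are both strictly less than $4n = 2\deg q$; hence only degree $d=1$ Gromov--Witten invariants can contribute. I would then compute each such invariant by a quantum-to-classical principle in the style of Buch--Kresch--Tamvakis: parameterize degree-one curves on $\IG$ by an auxiliary isotropic incidence variety $\IF(1,3;2n+1)$ (lines in $\IG$ correspond to isotropic flags $V_1 \subset V_3$ with an appropriate compatibility with the skew form of odd rank), and realize the three-pointed invariant $\langle \tau_\lambda, \tau_{a,b}, \tau_\nu\rangle_1$ as a classical triple intersection on this incidence variety. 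The nonzero quantum cases should turn out to be exactly $a = 2n-1$, which is precisely where a classical Pieri product would exit the allowed indexing set $\Lambda$.

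The principal obstacle is the horospherical (non-homogeneous) nature of $\IG(2,2n+1)$: lines on it do not form a homogeneous space, so the moduli of degree-one stable maps is subtler than in the $G/P$ setting. In particular, the reducible quantum contribution $q(\tau_{2n-1,-1}+\tau_{2n-2})$ appearing in both rules must be explained by showing that when $(a,b)$ hits the appropriate extreme the relevant family of lines degenerates into two components, each contributing one of the two Schubert classes. I would verify this either by a direct geometric analysis of the isotropic line variety, or by exploiting the embedding $\IG(2,2n+1) \hookrightarrow \IG(2,2n+2)$: transport the known quantum Pieri formulas from the homogeneous symplectic Grassmannian across this embedding, restrict Schubert classes using the known comparison of Schubert bases, and account for the additional contributions coming from curves whose image meets the exceptional hyperplane. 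Finally, I would cross-check low-rank cases (say $n = 2, 3$) by hand using the presentation of $\QH^*(\IG)$ to confirm the formulas.
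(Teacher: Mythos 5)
The paper offers no proof of this proposition: it is quoted verbatim from P\'ech [Theorem 1], so the only fair comparison is between your outline and what a genuine proof (P\'ech's) must accomplish. Your dimension count ruling out $d\geq 2$ is correct ($1+|\tau_{a,b}|\leq 4n-2<2\deg q$, and similarly for $\tau_{1,1}$), and the overall architecture --- classical Pieri first, then degree-one Gromov--Witten invariants via the geometry of lines --- is the right shape. But as written this is a research plan, not a proof: every genuinely hard step is flagged and then deferred. The multiplicity $2$ and the extra term $\tau_{a+2,b-1}$ at $a+b=2n-3$ are \emph{asserted} to come from the two-orbit structure of $\IG(2,2n+1)$ under the odd symplectic group; that intuition is correct, but turning it into a multiplicity computation requires an actual analysis of how a general translate of the special Schubert variety meets $X_{a,b}$ along the closed orbit, and Kleiman transversality is not available for ``general translates'' on a non-homogeneous variety --- this is precisely the point where the standard Grassmannian argument you invoke breaks down and must be replaced by the explicit orbit-by-orbit intersection theory that P\'ech carries out. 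The same issue infects the quantum-to-classical step: the Buch--Kresch--Tamvakis correspondence relies on transitivity statements (e.g.\ the group acting transitively on triples of points spanning a line's kernel/span data) that fail here, so the reduction to a classical computation on an incidence variety needs to be re-proved from scratch, not imported.

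One of your two fallback strategies is also concretely flawed: transporting the quantum Pieri rule across $\IG(2,2n+1)\hookrightarrow \IG(2,2n+2)$ cannot work naively, because the quantum parameters live in different degrees --- $\deg q=2n$ on $\IG(2,2n+1)$ but $\deg q=2n+1$ on $\IG(2,2n+2)$ --- so quantum products do not restrict along the embedding, and the ``extra contributions from curves meeting the exceptional hyperplane'' you propose to track are not corrections to a matching formula but the entire content of the computation. Finally, the proposed sanity check of low-rank cases ``using the presentation of $\QH^*(\IG)$'' is close to circular, since the known presentations are themselves derived from the Pieri/Chevalley formulas. In short: correct skeleton, correct degree bound, but the classical multiplicities, the degree-one invariants on a horospherical (hence non-homogeneous) space, and the comparison with the even case all remain unproved, and the last of these is not repairable in the form you state it.
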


\begin{lemma} \label{ident:lem}We have the following identities.
\begin{enumerate} 
\item Let $t\leq n-2$. Then \[\sigma_{(t,t)}=\tau_{(t,t)}=\Pi_{i=1}^t \tau_{(1,1)}=\Pi_{i=1}^t \sigma_{(1,1)}.\]
\item Let $|\lambda|\geq 2n$ and $t:=2n-\lambda_1$. 
\begin{enumerate}
\item If $\lambda_2+t \neq 2n-2$. Then
\[\left(\Pi_{i=1}^{t} \tau_{(1,1)} \right)\star\tau_\lambda=q\tau_{(\lambda_2+t)}.\]
\item If $\lambda_2+t = 2n-2$. Then
\[ \left(\Pi_{i=1}^{t} \tau_{(1,1)} \right)\star\tau_\lambda=q\tau_{(2n-1,-1)}+q\tau_{(2n-2)}.\]
\end{enumerate}
\item We have that
\[\displaystyle \Pi_{i=1}^{n-1} \tau_{(1,1)}=\tau_{(n,n-2)}.\]


\item If $2t+|\mu|\leq 2n-3$ and $t \leq n-2$ then
\[ \left(\Pi_{i=1}^t \tau_{(1,1)}\right) \star \tau_\mu=\tau_{(\mu_1+t,\mu_2+t)}.\]

\item If $2t+|\mu|=2n-2$ or $2n-1$ and $t \leq n-2$ then
\begin{eqnarray*}
\left(\Pi_{i=1}^t \tau_{(1,1)}\right) \star \tau_\mu=+\tau_{(\mu_1+t+1,\mu_2+t-1)}+\tau_{(\mu_1+t,\mu_2+t)}.
\end{eqnarray*}

\end{enumerate}
\end{lemma}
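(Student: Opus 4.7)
The plan is to derive each of the five identities by iteratively applying the quantum Pieri rule for $\tau_{(1,1)}$ from Proposition~\ref{pieri:prop}, using associativity to reduce a product of many copies of $\tau_{(1,1)}$ to a sequence of single Pieri multiplications, then tracking at each step which case of the rule applies.

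For part (1), I would iterate $\tau_{(1,1)} \star \tau_{(s,s)} = \tau_{(s+1,s+1)}$ for $0 \le s \le t-1$. The generic case of the rule applies because the index sum $2s \le 2(n-3)$ avoids $2n-4$ and $2n-3$, and the top index $s < 2n-1$. The identification $\sigma_{(t,t)} = \tau_{(t,t)}$ is then immediate from Proposition~\ref{1:prop}(2) since $|(t,t)| = 2t \le 2n-4 < 2n$; the equality $\prod \sigma_{(1,1)} = \prod \tau_{(1,1)}$ follows because $\sigma_{(1,1)} = \tau_{(1,1)}$ by that same proposition. For part (4) I would iterate $\tau_{(1,1)} \star \tau_{(\mu_1+k,\mu_2+k)} = \tau_{(\mu_1+k+1,\mu_2+k+1)}$ for $0 \le k \le t-1$; the hypothesis $2t+|\mu| \le 2n-3$ forces the index sum at every intermediate step to be at most $2n-5$, staying in the generic case, and a separate check shows the top index never reaches $2n-1$.

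For part (5), the same iteration works for the first $t-1$ multiplications: then the index sum is $|\mu| + 2(t-1) \in \{2n-4, 2n-3\}$, and the boundary case of the $\tau_{(1,1)}$ Pieri rule produces the two-term sum $\tau_{(\mu_1+t+1,\mu_2+t-1)} + \tau_{(\mu_1+t,\mu_2+t)}$. I will verify that $\mu_1 + t - 1 < 2n - 1$ (using $\mu_1 \le |\mu|$ and the hypothesis) so the $a = 2n-1$ branch of Pieri does not intervene. Part (3) is the specialization: apply part (1) with $t = n-2$ to obtain $\prod_{i=1}^{n-2}\tau_{(1,1)} = \tau_{(n-2,n-2)}$, then multiply by one more $\tau_{(1,1)}$; here $a+b = 2n-4$ triggers the boundary case and formally yields $\tau_{(n,n-2)} + \tau_{(n-1,n-1)}$, but $(n-1,n-1) \notin \Lambda$, so only $\tau_{(n,n-2)}$ survives.

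Part (2) will be the main obstacle, for two reasons. First, since $|\lambda| \ge 2n$ forces $\lambda_1 \ge n-1$, the exponent $t = 2n - \lambda_1$ can be as large as $n+1$, so part (1) does not apply directly to simplify $\prod_{i=1}^{t}\tau_{(1,1)}$ in isolation. Second, the iteration must pass through the $a = 2n-1$ branch of Pieri, which is where the quantum parameter $q$ first appears, and the two subcases (a) and (b) hinge on whether this final step lands on $b = 2n-3$. My plan is to apply $\tau_{(1,1)}$ one factor at a time to $\tau_\lambda$: the first $t-1$ steps use the generic case and produce $\tau_{(2n-t+k,\lambda_2+k)}$ after step $k$, which is valid because $|\lambda| \ge 2n$ forces $\lambda_2 \ge t$ and therefore the running index sum stays well above $2n-3$. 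After $t-1$ steps we reach $\tau_{(2n-1,\lambda_2+t-1)}$; using the defining inequalities of $\Lambda$ (in particular $\lambda_1 > n-2 \Rightarrow \lambda_1 > \lambda_2$ when $\lambda_1 = 2n-t \ge n-1$) I will check $\lambda_2 + t \le 2n-1$, so the final multiplication falls in the $a = 2n-1$ branch and yields $q\tau_{(\lambda_2+t)}$ when $\lambda_2 + t - 1 \ne 2n-3$ and $q(\tau_{(2n-1,-1)} + \tau_{(2n-2)})$ when $\lambda_2 + t - 1 = 2n-3$, matching (a) and (b) exactly.
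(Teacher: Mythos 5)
Your proposal is correct and follows essentially the same route as the paper: every identity is obtained by iterating the $\tau_{(1,1)}$ quantum Pieri rule, reducing in part (2) to the single multiplication $\tau_{(1,1)}\star\tau_{(2n-1,\lambda_2+t-1)}$ whose $a=2n-1$ branch produces the two subcases. (The only slip is the remark that $t$ can reach $n+1$ in part (2); the conditions defining $\Lambda$ force $\lambda_1\geq n+1$ and hence $t\leq n-1$, which is exactly why the paper can factor out $\tau_{(t-1,t-1)}$ via part (1) — but your step-by-step iteration is valid regardless.)
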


\begin{proof}
Part (1) is clear since $2t\leq 2n-4$. For Part (2), $\tau_{(1,1)}\star\tau_{(t-1,t-1)}\star \tau_{\lambda}=\tau_{(1,1)}\star \tau_{(2n-1,\lambda_2+t-1)}=q\tau_{(\lambda_2+t)}$ or $\tau_{(1,1)}\star\tau_{(t-1,t-1)}\star \tau_{\lambda}=\tau_{(1,1)}\star \tau_{(2n-1,\lambda_2+t-1)}=q\tau_{(2n-1,-1)}+q\tau_{(2n-2)}.$ For Part (3),
$\tau_{(1,1)}\star \Pi_{i=1}^{n-2} \tau_{(1,1)}=\tau_{(1,1)}\star\tau_{(n-2,n-2)}=\tau_{(n,n-2)}.$  Part (4) is clear. For Part (5), we have $\tau_{(1,1)}\star \left(\Pi_{i=1}^{t-1} \tau_{(1,1)}\right) \star \tau_\mu=\tau_{(1,1)}\star \tau_{(\mu_1+t-1,\mu_2+t-1)}=\tau_{(\mu_1+t,\mu_2+t)}+\tau_{(\mu_1+t+1,\mu_2+t-1)}.$ This completes the proof.
\end{proof}




\section{The $|\lambda|=2n$ case} \label{2:sec}
In this section we will assume that $|\lambda|=2n$. The main proposition of this section is stated next.

\begin{prop} \label{2:prop}
Let $|\lambda|=2n$. If $\tau_\lambda=\sigma_\lambda+aq$ and Condition (**) holds then $\tau_\lambda=\sigma_\lambda$.
\end{prop}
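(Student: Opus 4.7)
The plan is to view the statement through the parametrization $\lambda^{(k)}:=(2n-k,k)$ for $k\in\{1,\ldots,n-1\}$, which exhausts the elements of $\Lambda$ with $|\lambda|=2n$ when $n\geq 3$; write $a_k$ for the corresponding unknown constant, so $\tau_{\lambda^{(k)}}=\sigma_{\lambda^{(k)}}+a_k q$. The strategy is to pin down each $a_k$ by exhibiting two different multiplications with $\sigma_{(1,1)}$ (or its powers) and using Condition (**) to force $a_k\geq 0$ and $a_k\leq 0$ simultaneously. Two preliminary observations are used throughout. By Proposition \ref{1:prop}(2), $\sigma_\nu=\tau_\nu$ whenever $|\nu|<2n$; in particular $\sigma_{(1,1)}=\tau_{(1,1)}$, $\sigma_{(j,j)}=\tau_{(j,j)}$ for $j\leq n-1$, and $\sigma_{(n,n-2)}=\tau_{(n,n-2)}$. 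Moreover, an induction on $m$ using Condition (**) shows that $\sigma_{(1,1)}^{m}\star\sigma_\nu$ has non-negative $\sigma$-coefficients for every $m\geq 0$ and $\nu\in\Lambda$.

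For the lower bound $a_k\geq 0$, set $\mu^{(k)}:=(2n-k-1,k-1)$. A routine check gives $\mu^{(k)}\in\Lambda$, and $|\mu^{(k)}|=2n-2$ implies $\sigma_{\mu^{(k)}}=\tau_{\mu^{(k)}}$. The quantum Pieri rule (Proposition \ref{pieri:prop}) yields $\tau_{(1,1)}\star\tau_{\mu^{(k)}}=\tau_{\lambda^{(k)}}$, and translating into the $\sigma$-basis gives
\[
\sigma_{(1,1)}\star\sigma_{\mu^{(k)}}=\tau_{\lambda^{(k)}}=\sigma_{\lambda^{(k)}}+a_k q,
\]
so Condition (**) forces $a_k\geq 0$.

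For the upper bound $a_k\leq 0$, apply Lemma \ref{ident:lem}(2) to $\lambda^{(k)}$, for which $t:=2n-\lambda^{(k)}_1=k$ and $\lambda^{(k)}_2+t=2k$, which equals $2n-2$ precisely when $k=n-1$. For $k\leq n-2$, parts (1) and (2a) of Lemma \ref{ident:lem} give $\prod_{i=1}^{k}\tau_{(1,1)}=\tau_{(k,k)}=\sigma_{(k,k)}$ and $\tau_{(k,k)}\star\tau_{\lambda^{(k)}}=q\tau_{2k}$; substituting $\tau_{\lambda^{(k)}}=\sigma_{\lambda^{(k)}}+a_k q$ and using $\tau_{2k}=\sigma_{2k}$ produces
\[
\sigma_{(k,k)}\star\sigma_{\lambda^{(k)}}=q\sigma_{2k}-a_k q\,\sigma_{(k,k)}.
\]
For $k=n-1$, parts (3) and (2b) instead yield $\tau_{(n,n-2)}\star\tau_{\lambda^{(n-1)}}=q\tau_{(2n-1,-1)}+q\tau_{(2n-2)}$, whence
\[
\sigma_{(n,n-2)}\star\sigma_{\lambda^{(n-1)}}=q\sigma_{(2n-1,-1)}+q\sigma_{(2n-2)}-a_{n-1}q\,\sigma_{(n,n-2)}.
\]
By the iterated Condition (**), both products have non-negative $\sigma$-coefficients; reading off the coefficient of $\sigma_{(k,k)}$ (respectively $\sigma_{(n,n-2)}$) forces $a_k\leq 0$. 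Combined with the lower bound, this gives $a_k=0$ for every $k$ and hence $\tau_\lambda=\sigma_\lambda$.

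The main subtlety is recognizing that Condition (**), though phrased only for multiplication by $\sigma_{(1,1)}$, also controls multiplication by $\sigma_{(1,1)}^{k}=\sigma_{(k,k)}$ (and $\sigma_{(n,n-2)}$ when $k=n-1$); this is precisely what is needed to isolate the coefficient of $a_k$. Once this observation is made, the case split $k<n-1$ versus $k=n-1$ is only bookkeeping coming from the two branches of Lemma \ref{ident:lem}(2).
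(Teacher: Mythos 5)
Your proposal is correct and follows essentially the same route as the paper: the lower bound $a_k\geq 0$ via the Pieri product $\tau_{(1,1)}\star\tau_{(2n-k-1,k-1)}=\tau_{(2n-k,k)}$ is the paper's Lemma \ref{a>=0:Lem}, and the upper bound via $\prod_{i=1}^{t}\sigma_{(1,1)}$ with the case split $k\leq n-2$ versus $k=n-1$ reproduces Lemmas \ref{a<=0:Lem} and \ref{a<=02:lem}. Your explicit remark that Condition (**) propagates to powers $\sigma_{(1,1)}^{m}$ by induction is a point the paper leaves implicit, but the argument is the same.
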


\begin{proof}
By Proposition \ref{1:prop} it must be the case that $\tau_{\lambda}=\sigma_\lambda+aq.$ We show $a\leq 0$ in two parts. Lemma \ref{a<=0:Lem} considers the $\lambda_1\geq n+2$ case and Lemma \ref{a<=02:lem} considers the $\lambda=(n+1,n-1)$ case. We show $a \geq 0$ in Lemma \ref{a>=0:Lem} as a straightforward application of the quantum Pieri rule. This completes the proof.
\end{proof}

\begin{lemma} \label{a<=0:Lem}
Let $|\lambda|=2n$ and $\lambda_1 \geq n+2$. If $\tau_\lambda=\sigma_\lambda+aq$ and Condition (**) holds then $a \leq 0$.
\end{lemma}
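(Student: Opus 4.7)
The plan is to evaluate an iterated quantum product $\sigma_{(1,1)}^t \star \sigma_\lambda$ for a carefully chosen exponent $t$, obtain a closed form via the identities of Lemma \ref{ident:lem}, and then extract the bound on $a$ from Condition (**).

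I would set $t := 2n - \lambda_1$. The hypothesis $\lambda_1 \geq n+2$ gives $1 \leq t \leq n-2$, and since $\lambda_1 + \lambda_2 = 2n$ we also have $\lambda_2 + t = 2t \leq 2n - 4$. These bounds are exactly what is needed to invoke: Lemma \ref{ident:lem}(1), which identifies $\sigma_{(1,1)}^t = \sigma_{(t,t)} = \tau_{(t,t)}$; Lemma \ref{ident:lem}(2)(a) (rather than the exceptional case (2)(b), which would arise only when $\lambda_1 = n+1$); and Proposition \ref{1:prop}(2), which identifies small-degree $\tau$'s with their $\sigma$-basis counterparts.

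Substituting $\sigma_\lambda = \tau_\lambda - aq$ into the iterated product, I expect a short computation to collapse to
\[\sigma_{(1,1)}^t \star \sigma_\lambda = \tau_{(t,t)} \star \tau_\lambda - aq\,\tau_{(t,t)} = q\,\sigma_{(2t)} - aq\,\sigma_{(t,t)}.\]
Because $t \geq 1$, the partitions $(2t, 0)$ and $(t, t)$ are distinct elements of $\Lambda$, so the coefficient of $q\,\sigma_{(t,t)}$ on the right-hand side is exactly $-a$.

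The last step is to observe that Condition (**) is iterable: multiplication by $\sigma_{(1,1)}$ sends non-negative $\sigma$-basis combinations to non-negative $\sigma$-basis combinations, so a straightforward induction on $t$ shows that $\sigma_{(1,1)}^t \star \sigma_\lambda$ has non-negative $\sigma$-basis coefficients. Comparing with the closed form above then forces $-a \geq 0$, i.e.\ $a \leq 0$. The main subtlety is the choice of $t$: it is dictated by Lemma \ref{ident:lem}(2) so that the product collapses to a single quantum term with a clean coefficient of $-a$, and the requirement $\lambda_2 + t \neq 2n - 2$ translates precisely to $\lambda_1 \neq n+1$, explaining why $\lambda = (n+1, n-1)$ must be treated in a separate lemma.
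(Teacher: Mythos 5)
Your proposal is correct and follows essentially the same route as the paper: the same choice $t = 2n-\lambda_1$, the same use of Lemma \ref{ident:lem}(1) and (2)(a) to collapse $\sigma_{(1,1)}^{t}\star\sigma_\lambda$ to $q\sigma_{(\lambda_2+t)} - aq\sigma_{(t,t)}$, and the same appeal to Condition (**). Your explicit remarks that $(2t,0)\neq(t,t)$ and that Condition (**) iterates under repeated multiplication by $\sigma_{(1,1)}$ are points the paper leaves implicit, but they do not change the argument.
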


\begin{proof}
Let $t:=2n-\lambda_1 \leq n-2$. Note that $t+\lambda_1=2n$. Then we have the following by multiplying $\sigma_\lambda=\tau_\lambda-aq$ by $\left( \Pi_{i=1}^t \sigma_{(1,1)} \right)$ and using Part (1) of Lemma \ref{ident:lem}. \[ \left( \Pi_{i=1}^t \sigma_{(1,1)} \right) \star \sigma_\lambda=\tau_{(t,t)}\star\tau_\lambda-a\sigma_{(t,t)}q. \] By Part (2) of Lemma \ref{ident:lem} we have
$\tau_{(t,t)}\star\tau_\lambda=q\tau_{(\lambda_2+t)}=q\sigma_{(\lambda_2+t)}.$ 
So, \[ \left( \Pi_{i=1}^t \sigma_{(1,1)} \right) \star \sigma_\lambda=q\sigma_{(\lambda_2+t)}-a\sigma_{(t,t)}q. \]
It follows from Condition (**) that $a \leq 0$.
\end{proof}

We will now prove $a \leq 0$ for the $\lambda=(n+1,n-1)$ case.
\begin{lemma} \label{a<=02:lem}
Let $\lambda=(n+1,n-1)$. If $\tau_\lambda=\sigma_\lambda+aq$ and Condition (**) holds then $a \leq 0$.
\end{lemma}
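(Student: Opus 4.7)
The plan is to mimic the argument of Lemma~\ref{a<=0:Lem}, but since $t := 2n - \lambda_1 = n-1$ falls outside the range $t \leq n-2$ required for Part~(1) of Lemma~\ref{ident:lem}, I would instead multiply by the product of $n-1$ copies of $\sigma_{(1,1)}$ and invoke Part~(3). The key preliminary observation is that
\[ \Pi_{i=1}^{n-1} \sigma_{(1,1)} = \Pi_{i=1}^{n-1} \tau_{(1,1)} = \tau_{(n,n-2)} = \sigma_{(n,n-2)}, \]
where the outer equalities use Proposition~\ref{1:prop}(2) (valid since $|(1,1)|=2<2n$ and $|(n,n-2)|=2n-2<2n$), and the middle equality is Part~(3) of Lemma~\ref{ident:lem}.

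Next I would multiply both sides of $\sigma_\lambda = \tau_\lambda - aq$ by this product. Using $\lambda_2 + t = (n-1)+(n-1) = 2n-2$ together with Part~(2)(b) of Lemma~\ref{ident:lem}, the $\tau$-side gives
\[ \tau_{(n,n-2)} \star \tau_\lambda = q\tau_{(2n-1,-1)} + q\tau_{(2n-2)}, \]
and since every class appearing on the right has degree strictly less than $2n$, Proposition~\ref{1:prop}(2) lets me replace each $\tau$ by the corresponding $\sigma$. The net outcome of the computation is
\[ \sigma_{(n,n-2)} \star \sigma_\lambda \;=\; q\sigma_{(2n-1,-1)} + q\sigma_{(2n-2)} - aq\,\sigma_{(n,n-2)}. \]

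To finish I would extract the coefficient of $\sigma_{(n,n-2)}$ on the right and appeal to Condition~(**): iterating (**) through the $n-1$ factors of $\sigma_{(1,1)}$ forces the full expansion to have non-negative coefficients, yielding $-a \geq 0$. The only genuine bookkeeping point — and the step I expect to be the only real obstacle — is confirming that $(n,n-2)$, $(2n-1,-1)$, and $(2n-2,0)$ are three \emph{distinct} elements of $\Lambda$, so that $\sigma_{(n,n-2)}$ really only appears in the last summand. This is precisely where the hypothesis $n\geq 3$ enters, since at $n=2$ the classes $(n,n-2)$ and $(2n-2,0)$ coincide. Once distinctness is in hand, the conclusion $a\leq 0$ is immediate.
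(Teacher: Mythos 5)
Your proposal is correct and follows essentially the same route as the paper's proof: multiply $\sigma_\lambda = \tau_\lambda - aq$ by $\Pi_{i=1}^{n-1}\sigma_{(1,1)}$, identify this product with $\sigma_{(n,n-2)}$ via Part (3) of Lemma \ref{ident:lem}, compute the $\tau$-side with Part (2)(b), and read off $-a \geq 0$ from Condition (**). Your added check that $(n,n-2)$, $(2n-1,-1)$, and $(2n-2,0)$ are distinct elements of $\Lambda$ (which is where $n \geq 3$ is used) is a worthwhile detail the paper leaves implicit.
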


\begin{proof}
Recall from Part (3) of Lemma \ref{ident:lem} that $\Pi_{i=1}^{n-1} \tau_{(1,1)}=\tau_{(n,n-2)}$ and from Part (2) of Lemma \ref{ident:lem} we have that
$\displaystyle \left(\Pi_{i=1}^{n-1} \tau_{(1,1)} \right)\star\tau_{\lambda}=q\tau_{(2n-1,-1)}+q\tau_{(2n-2)}.$ 
Multiplying $\sigma_\lambda=\tau_\lambda-aq$ by $\left(\Pi_{i=1}^{n-1} \sigma_{(1,1)}\right)$ and substituting in the identities yields
 \begin{eqnarray*}
 \left(\Pi_{i=1}^{n-1} \sigma_{(1,1)} \right)\star\sigma_{\lambda}&=&\left(\Pi_{i=1}^{n-1} \tau_{(1,1)} \right)\star \tau_\lambda-a\left(\Pi_{i=1}^{n-1} \tau_{(1,1)} \right)q\\
 &=& q\tau_{(2n-1,-1)}+q\tau_{(2n-2)}-aq\tau_{(n,n-2)}\\
 &=&q\sigma_{(2n-1,-1)}+q\sigma_{(2n-2)}-aq\sigma_{(n,n-2)}.
 \end{eqnarray*}
 It follows from Condition (**) that $a \leq 0$.
\end{proof}

We conclude the section by showing that $a\geq0$ in the next lemma.

\begin{lemma} \label{a>=0:Lem}
Let $|\lambda|=2n$. If $\tau_\lambda=\sigma_\lambda+aq$ and Condition (**) holds then $a \geq 0$.
\end{lemma}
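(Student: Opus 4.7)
The plan is to produce a single Pieri computation whose positivity (via Condition (**)) directly forces $a \geq 0$. The approach is the mirror image of Lemmas \ref{a<=0:Lem} and \ref{a<=02:lem}: instead of multiplying $\sigma_\lambda$ to isolate a class whose coefficient picks up $-a$, I would multiply a lower-degree class whose product already equals $\tau_\lambda = \sigma_\lambda + aq$, so that the $q$-coefficient appears with a $+a$.

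Concretely, for each $\lambda$ with $|\lambda|=2n$ I would exhibit $\mu \in \Lambda$ with $|\mu|=2n-2$ such that $\tau_{(1,1)} \star \tau_\mu = \tau_\lambda$ in $\QH^*(\IG)$. Since $|\mu| < 2n$ and $|(1,1)| = 2 < 2n$, Proposition \ref{1:prop}(2) gives $\sigma_\mu = \tau_\mu$ and $\sigma_{(1,1)} = \tau_{(1,1)}$, so
\[ \sigma_{(1,1)} \star \sigma_\mu \;=\; \tau_{(1,1)} \star \tau_\mu \;=\; \tau_\lambda \;=\; \sigma_\lambda + aq. \]
Condition (**) requires every $\sigma$-coefficient on the left-hand side to be non-negative; in particular the coefficient of $q\sigma_{(0,0)} = q$, which is $a$, must satisfy $a \geq 0$.

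To construct $\mu$, first observe that for $|\lambda| = 2n$ the constraints defining $\Lambda$ force $n+1 \leq \lambda_1 \leq 2n-1$ and $\lambda_2 = 2n - \lambda_1$ (in particular $(n,n) \notin \Lambda$ for $n\geq 3$). I would set $\mu := (\lambda_1 - 1, \lambda_2 - 1)$. A routine check confirms $\mu \in \Lambda$: one has $n \leq \mu_1 \leq 2n-2$, $0 \leq \mu_2 \leq n-2$, and $\mu_1 > \mu_2$ (inherited from $\lambda_1 > \lambda_2$). Then $\mu_1 + \mu_2 = 2n-2 \notin \{2n-4, 2n-3\}$ and $\mu_1 \neq 2n-1$, so the first case of the quantum Pieri rule (Proposition \ref{pieri:prop}) yields $\tau_{(1,1)} \star \tau_\mu = \tau_{(\mu_1+1,\,\mu_2+1)} = \tau_\lambda$, as needed. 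I anticipate no significant obstacle; the argument is a direct single-line application of the Pieri rule, matching the author's remark in the outline of Proposition \ref{2:prop}.
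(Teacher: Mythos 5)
Your proof is correct and is essentially the paper's own argument: the author writes $\lambda = \lambda^j = (n+1+j, n-1-j)$ and multiplies $\tau_{(1,1)}$ by $\tau_{(n+j,n-2-j)}$, which is exactly your $\mu = (\lambda_1-1,\lambda_2-1)$, then invokes Proposition \ref{1:prop}(2) and Condition (**) in the same way. Your write-up just adds the (correct) verification that $\mu \in \Lambda$ and that the first case of the Pieri rule applies.
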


\begin{proof}
Let $\lambda^j=(n+1+j,n-1-j)$ for all $j=0,1,2,...,n-2$. Assume that $\tau_{\lambda^j}=\sigma_{\lambda^j}+a_jq.$ Then for all $0 \leq j \leq n-2$ it follows from the quantum Pieri rule that $\tau_{(1,1)} \star \tau_{(n+j,n-2-j)}=\tau_{\lambda^j}.$ Since $\tau_{(n+j,n-2-j)}=\sigma_{(n+j,n-2-j)}$ by Part (2) of Lemma \ref{1:prop}, we have that
\begin{eqnarray*}
    \sigma_{(1,1)}\star\sigma_{(n+j,n-2-j)}=\tau_{(1,1)}\star\tau_{(n+j,n-2-j)}=\tau_{\lambda^j}=\sigma_{\lambda^j}+a_jq.
\end{eqnarray*}
It follows from Condition (**) that $a_j\geq 0$ for all $j=0, \cdots, n-2$.
\end{proof}

\section{The $|\lambda|>2n$ case} \label{3:sec}

In this section we will assume that $|\lambda|>2n$. Recall that by Proposition \ref{1:prop} it must be the case that $\tau_\lambda=\sigma_\lambda+\sum_{|\mu|+2n=|\lambda|} a_{\mu}q\sigma_{\mu}.$

\begin{lemma} \label{3:lem} 
Let $|\lambda|>2n$. If $ \tau_\lambda=\sigma_\lambda+\sum_{|\mu|+2n=|\lambda|}a_{\mu}q\sigma_{\mu}$ and Condition (**) holds then $a_\mu \leq 0$ or there is a $\mu'$ such that $a_\mu+a_{\mu'} \leq 0$.
\end{lemma}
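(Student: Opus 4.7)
The plan is to multiply the rewritten deformation relation
$$\sigma_\lambda \;=\; \tau_\lambda \;-\; \sum_{|\mu|+2n=|\lambda|} a_\mu\, q\,\sigma_\mu$$
by a suitable power $\prod_{i=1}^{t}\sigma_{(1,1)}$, in the spirit of Lemma \ref{a<=0:Lem}, and to read off inequalities from Condition (**); iterating Condition (**) implies that $\sigma_{(1,1)}^{t}\star\sigma_\nu$ has non-negative $\sigma$-basis coefficients for every $\nu$. Since every relevant $\mu$ has $|\mu| = |\lambda|-2n \le 2n-3 < 2n$, Proposition \ref{1:prop}(2) identifies $\sigma_\mu = \tau_\mu$, and likewise $\prod_{i=1}^{t}\sigma_{(1,1)} = \tau_{(t,t)}$ for $t\le n-2$ by Lemma \ref{ident:lem}(1). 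Thus both sides of the multiplied relation are fully computable in the $\tau$-basis via parts (2), (4), (5) of Lemma \ref{ident:lem}.

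\textbf{Principal regime} $\lambda_1 \ge n+2$. Take $t := 2n-\lambda_1 \le n-2$. Part (2) gives the $\tau_\lambda$-side as $q\tau_{(\lambda_2+t)}$, or $q(\tau_{(2n-1,-1)}+\tau_{(2n-2)})$ in the subcase $\lambda_2+t = 2n-2$. The correction side $\sum a_\mu q\,\tau_{(t,t)}\star\tau_\mu$ is computed by Part (4) or (5) depending on $\lambda_1-\lambda_2$: if $\lambda_1-\lambda_2\ge 3$, each $\tau_{(t,t)}\star\tau_\mu = \sigma_{(\mu_1+t,\mu_2+t)}$ is a single Schubert class; if $\lambda_1-\lambda_2 \in \{1,2\}$, each product equals $\sigma_{(\mu_1+t+1,\mu_2+t-1)} + \sigma_{(\mu_1+t,\mu_2+t)}$.

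In the single-term subcase, the classes $\sigma_{(\mu_1+t,\mu_2+t)}$ are distinct for distinct $\mu$ and are disjoint from the $\tau_\lambda$-side classes, because their second indices satisfy $\mu_2+t\ge 1$ whereas the $\tau_\lambda$-side classes have second index in $\{-1,0\}$. Condition (**) therefore forces $-a_\mu \ge 0$, i.e., $a_\mu \le 0$, for every $\mu$. In the two-term subcase, the coefficient of the targeted class $\sigma_{(\mu_1+t,\mu_2+t)}$ on the right receives a contribution from $\mu$ itself and, when $\mu' := (\mu_1-1,\mu_2+1)$ lies in $\Lambda$, an additional contribution from $\mu'$; Condition (**) then yields $a_\mu + a_{\mu'} \le 0$ when $\mu'\in\Lambda$, and $a_\mu \le 0$ otherwise.

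\textbf{Edge case} $\lambda_1 = n+1$. Combined with $|\lambda|>2n$ and the $\Lambda$-constraint $\lambda_1 > \lambda_2$, this forces $\lambda = (n+1,n)$ with the single correction term $\mu = (1,0)$. Multiply instead by $\prod_{i=1}^{n-1}\sigma_{(1,1)} = \tau_{(n,n-2)}$ from Lemma \ref{ident:lem}(3). Computing $\tau_{(n,n-2)}\star\tau_{(n+1,n)} = q\tau_{(2n-1)}$ (Part (2)(a)) and $\tau_{(n,n-2)}\star\tau_1 = \tau_{(n+1,n-2)} + \tau_{(n,n-1)}$ (quantum Pieri, first case) yields
$$\sigma_{(1,1)}^{n-1}\star\sigma_\lambda = q\sigma_{(2n-1)} - aq\bigl(\sigma_{(n+1,n-2)} + \sigma_{(n,n-1)}\bigr),$$
from which $a \le 0$. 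The main obstacle throughout is the bookkeeping in the Part (5) two-term subcase: identifying the correct pairing $\mu\leftrightarrow\mu'$, verifying that the targeted class $\sigma_{(\mu_1+t,\mu_2+t)}$ never coincides with any class on the $\tau_\lambda$-side, and correctly checking membership of $\mu'$ in $\Lambda$ (including the strict-inequality clause triggered when $\mu_1 - 1 > n-2$). Once these combinatorial checks are handled, the conclusion follows by direct coefficient comparison.
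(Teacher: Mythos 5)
Your strategy is the same as the paper's: multiply $\sigma_\lambda=\tau_\lambda-\sum a_\mu q\sigma_\mu$ by $\prod_{i=1}^{t}\sigma_{(1,1)}$ with $t=2n-\lambda_1$, evaluate both sides via Lemma \ref{ident:lem}, and read off coefficients; your pairing $\mu\leftrightarrow\mu'=(\mu_1-1,\mu_2+1)$ is exactly the paper's consecutive pairing $a_i+a_{i+1}$, and your separate treatment of $\lambda=(n+1,n)$ and your disjointness check against $A(\lambda)$ (second index $\mu_2+t\ge 1$ versus $\{-1,0\}$) are both correct.

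There is, however, one boundary case you mishandle in the two-term regime. When $\lambda_1-\lambda_2=2$ (i.e.\ $2t+|\mu|=2n-2$), the sum includes $\mu=(n-1-t,\,n-1-t)$, and for this $\mu$ your ``targeted class'' $\sigma_{(\mu_1+t,\mu_2+t)}=\sigma_{(n-1,n-1)}$ is not an element of $\Lambda$ (it violates the condition $\lambda_1>n-2\Rightarrow\lambda_1>\lambda_2$), so the product $\bigl(\prod_{i=1}^{t}\tau_{(1,1)}\bigr)\star\tau_\mu$ is the \emph{single} class $\tau_{(n,n-2)}$, not a sum of two classes. Your rule then says: $\mu'=(n-2-t,\,n-t)\notin\Lambda$, hence $a_\mu\le 0$ — but that inequality is being read off the coefficient of a nonexistent class, so it is not established. (Concretely, for $n=3$ and $\lambda=(5,3)$ one has $\mu=(1,1)$, $\tau_{(1,1)}\star\tau_{(1,1)}=\tau_{(3,1)}$ only, and Condition (**) gives you $a_{(1,1)}+a_{(2,0)}\le 0$, not $a_{(1,1)}\le 0$.) The paper isolates exactly this term ($b_{n-1-t}$, contributing only to $\sigma_{(n,n-2)}$) and derives $b_{n-2-t}+b_{n-1-t}\le 0$ from the coefficient of $\sigma_{(n,n-2)}$, which still satisfies the lemma's disjunction with $\mu''=(\mu_1+1,\mu_2-1)$ in place of your $\mu'$. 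The fix is easy — pair this $\mu$ upward instead of downward — but as written your argument asserts an inequality it has not proved for this one $\mu$. You should also note (as your own computation implicitly requires) that Part (5) of Lemma \ref{ident:lem} must be read with the convention that $\tau_\nu=0$ for $\nu\notin\Lambda$.
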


\begin{proof}
If $|\lambda|>2n$ then $\lambda_1 \geq n+1$. Let $t:=2n-\lambda_1 \leq n-1$. Let $A(\lambda)=\sigma_{\lambda_2+t}$ if $\lambda_2+t \neq 2n-2$ and $A(\lambda)=\sigma_{(2n-1,-1)}+\sigma_{(2n-2)}$ if $\lambda_2+t = 2n-2$. We will multiply  $\sigma_\lambda=\tau_\lambda-\sum_{|\mu|+2n=|\lambda|}a_{\mu}q\sigma_{\mu}$ by $\left(\Pi_{i=1}^t \tau_{(1,1)}\right)$.
By Part (2) of Lemma \ref{ident:lem} we have that  $\left(\Pi_{i=1}^t \tau_{(1,1)}\right)\star \tau_{\lambda}=qA(\lambda).$ Since $\lambda_2+t<\lambda_1+t=2n$, we have that \[\left(\Pi_{i=1}^t \sigma_{(1,1)}\right)\star \sigma_{\lambda}=qA(\lambda)-\left(\Pi_{i=1}^t \sigma_{(1,1)}\right)\star \left( \sum_{|\mu|+2n=|\lambda|} a_{\mu}q\sigma_{\mu} \right).\] Next observe that $2t+|\mu|=2t+|\lambda|-2n=2n-\lambda_1+\lambda_2 \leq  2n-1.$ So, one of the following must occur:
\begin{itemize}
\item If $2t+|\mu| \leq 2n-3$ then by Part (4) of Lemma \ref{ident:lem} we have
\[
\left(\Pi_{i=1}^t \sigma_{(1,1)}\right)\star \sigma_\mu=\left(\Pi_{i=1}^t \tau_{(1,1)}\right) \star \tau_\mu=\tau_{(\mu_1+t,\mu_2+t)}=\sigma_{(\mu_1+t,\mu_2+t)}.\]
\item If $2t+|\mu|=2n-1$ or $2n-2$ then by Part (5) of Lemma \ref{ident:lem} we have
\begin{eqnarray*}
\left(\Pi_{i=1}^t \sigma_{(1,1)}\right)\star \sigma_\mu&=&\left(\Pi_{i=1}^t \tau_{(1,1)}\right) \star \tau_\mu=\tau_{(\mu_1+t,\mu_2+t)}+\tau_{(\mu_1+t+1,\mu_2+t-1)}\\
&=&\sigma_{(\mu_1+t+1,\mu_2+t-1)}+\sigma_{(\mu_1+t,\mu_2+t)}.
\end{eqnarray*}
\end{itemize}
Then $P:=\left(\Pi_{i=1}^t \sigma_{(1,1)}\right) \star \sigma_{\lambda}$ equals the following where terms are omitted when they do not satisfy the ring grading.
\begin{eqnarray*}
P&=&qA(\lambda)-\left( \sum_{\substack{|\mu|+2n=|\lambda| \\ 2t+|\mu| \leq 2n-3}} a_{\mu}q\sigma_{(\mu_1+t,\mu_2+t)} \right)-\left( \sum_{\substack{|\mu|+2n=|\lambda| \\ 2t+|\mu|=2n-1}} a_{\mu}q\left(\sigma_{(\mu_1+t+1,\mu_2+t-1)}+\sigma_{(\mu_1+t,\mu_2+t)}\right) \right)\\
 &-&\left( \sum_{\substack{|\mu|+2n=|\lambda| \\ 2t+|\mu|=2n-2}} a_{\mu}q\left(\sigma_{(\mu_1+t+1,\mu_2+t-1)}+\sigma_{(\mu_1+t,\mu_2+t)}\right) \right).
 \end{eqnarray*}
We have the following two equalities that will be used to precisely write the summations for the $2t+|\mu|=2n-1$ and $2t+|\mu|=2n-2$ cases.

\[ \left(\Pi_{i=1}^t \sigma_{(1,1)}\right)\star \sigma_{(2n-1-2t-i,i)}= \sigma_{(2n-t-i,t-1+i)}+
      \sigma_{(2n-1-t-i,t+i)} \mbox{ for } 0\leq i<n-1-t.
      \]

\[ \left(\Pi_{i=1}^t \sigma_{(1,1)}\right)\star \sigma_{(2n-2-2t-i,i)}=\begin{cases} 
      \sigma_{(2n-1-t-i,t-1+i)}+\sigma_{(2n-2-t-i,t+i)} &: 0\leq i<n-1-t \\
      \sigma_{(n,n-2)} &: i=n-1-t.
   \end{cases}
\]
To simplify notation we will let $a_i=a_{(2n-1-2t-i,i)}$ and $b_i=a_{(2n-2-2t-i,i)}$ for $0 \leq i \leq n-1-t$. Then we have the following identities.

\begin{eqnarray*}
 \sum_{\substack{|\mu|+2n=|\lambda| \\ 2t+|\mu|=2n-1}} a_{\mu}q\left(\sigma_{(\mu_1+t+1,\mu_2+t-1)}+\sigma_{(\mu_1+t,\mu_2+t)}\right) &=&\sum_{i=0}^{n-1-t}a_iq\left(\sigma_{(2n-t-i,t-1+i)}+\sigma_{(2n-1-t-i,t+i)}\right).\\
 \sum_{\substack{|\mu|+2n=|\lambda| \\ 2t+|\mu|=2n-2}} a_{\mu}q\left(\sigma_{(\mu_1+t+1,\mu_2+t-1)}+\sigma_{(\mu_1+t,\mu_2+t)}\right) &=&\left(\sum_{i=0}^{n-2-t}b_iq\left(\sigma_{(2n-1-t-i,t-1+i)}+\sigma_{(2n-2-t-i,t+i)}\right)\right)\\
 &+&b_{n-1-t}\sigma_{(n,n-2)}.
\end{eqnarray*}
It follows that
\begin{eqnarray*}
 P&=&qA(\lambda)-\left( \sum_{\substack{|\mu|+2n=|\lambda| \\ 2t+|\mu| \leq 2n-3}} a_{\mu}q\sigma_{(\mu_1+t,\mu_2+t)} \right)-\left(\sum_{i=0}^{n-1-t}a_iq\left(\sigma_{(2n-t-i,t-1+i)}+\sigma_{(2n-1-t-i,t+i)}\right) \right)\\
 &-&\left( \sum_{i=0}^{n-2-t}b_iq\left(\sigma_{(2n-1-t-i,t-1+i)}+\sigma_{(2n-2-t-i,t+i)}\right) \right)-b_{n-1-t}\sigma_{(n,n-2)}.
\end{eqnarray*}
Reorganizing the second two sums yields the following equation.
\begin{eqnarray*} 
P&=&qA(\lambda)-\left( \sum_{\substack{|\mu|+2n=|\lambda| \\ 2t+|\mu| \leq 2n-3}} a_{\mu}q\sigma_{(\mu_1+t,\mu_2+t)} \right)\\
&-&a_0q\sigma_{(2n-t,t-1)}-\left(\sum_{i=0}^{n-2-t}(a_i+a_{i+1}) q\sigma_{(2n-1-t-i,t+i)}\right)-a_{n-1-t}q\sigma_{(n+1,n-2)}\\
&-& b_0q\sigma_{(2n-1-t,t-1)}-\left(\sum_{i=0}^{n-2-t}(b_i+b_{i+1})q \sigma_{(2n-2-t-i,t+i)}\right).
\end{eqnarray*}

When $|\mu|+2n=|\lambda|$ and $2t+|\mu| \leq 2n-3$, we have that $a_\mu \leq 0$ by Condition (**). In the remaining cases, notice by Condition (**) that $a_i+a_{i+1} \leq 0$ or $b_i+b_{i+1} \leq 0$ for all $0 \leq i \leq n-2-t$. The result follows.\end{proof}

\begin{prop}\label{4:prop}
Let $|\lambda|>2n$. If $ \tau_\lambda=\sigma_\lambda+\sum_{|\mu|+2n=|\lambda|}a_{\mu}q\sigma_{\mu}$ and Condition (**) holds then $a_\mu=0$.
\end{prop}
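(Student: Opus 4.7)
The plan is to prove Proposition~\ref{4:prop} by induction on $|\lambda|$, complementing the non-positivity obtained in Lemma~\ref{3:lem} with a non-negativity estimate produced directly from the $\tau_{(1,1)}$ quantum Pieri rule.

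By Propositions~\ref{1:prop} and~\ref{2:prop} we already know $\sigma_{\mu'} = \tau_{\mu'}$ whenever $|\mu'| \leq 2n$; this furnishes the base of the induction. Fix $\lambda$ with $|\lambda| > 2n$ and assume $\sigma_{\mu'} = \tau_{\mu'}$ for every $\mu' \in \Lambda$ with $|\mu'| < |\lambda|$. From $\lambda_1 \leq 2n-1$ together with $|\lambda| > 2n$ one deduces $\lambda_1 \geq n+1$ and $\lambda_2 \geq 2$; combined with the defining implication of $\Lambda$ this gives $\lambda_1 > \lambda_2$. Set the \emph{precursor} index $\nu := (\lambda_1 - 1, \lambda_2 - 1)$. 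A short check of the defining conditions of $\Lambda$ shows $\nu \in \Lambda$, and clearly $|\nu| = |\lambda| - 2 < |\lambda|$.

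By the inductive hypothesis $\sigma_\nu = \tau_\nu$, and $\sigma_{(1,1)} = \tau_{(1,1)}$ by Proposition~\ref{1:prop}. Since $\nu_1 + \nu_2 = |\lambda| - 2 > 2n-3$ and $\nu_1 \leq 2n-2 < 2n-1$, the first case of the $\tau_{(1,1)}$ Pieri rule in Proposition~\ref{pieri:prop} applies and yields $\tau_{(1,1)} \star \tau_\nu = \tau_{(\nu_1+1, \nu_2+1)} = \tau_\lambda$ with no quantum correction. Substituting the deformation expansion of $\tau_\lambda$ gives
\[
\sigma_{(1,1)} \star \sigma_\nu \;=\; \tau_{(1,1)} \star \tau_\nu \;=\; \tau_\lambda \;=\; \sigma_\lambda + \sum_{|\mu| + 2n = |\lambda|} a_\mu \, q \, \sigma_\mu.
\]
Condition~(**) forces the left-hand side to expand with non-negative coefficients in the basis $\{\sigma_\eta\}$, and therefore $a_\mu \geq 0$ for every $\mu$.

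Combining this with Lemma~\ref{3:lem} finishes the argument: for each $\mu$, either $a_\mu \leq 0$, which together with $a_\mu \geq 0$ forces $a_\mu = 0$, or there exists $\mu'$ with $a_\mu + a_{\mu'} \leq 0$, which together with $a_\mu, a_{\mu'} \geq 0$ again forces both to vanish. The main obstacle is really the combinatorial bookkeeping needed to verify that the single precursor $\nu = (\lambda_1 - 1, \lambda_2 - 1)$ is always an admissible element of $\Lambda$ and that the quantum Pieri multiplication by $\tau_{(1,1)}$ collapses to the single classical term $\tau_\lambda$; once these are in place, the non-negativity from Condition~(**) and the non-positivity from Lemma~\ref{3:lem} slot together cleanly.
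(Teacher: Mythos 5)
Your proposal is correct and follows essentially the same route as the paper's proof: induction on $|\lambda|$, using the quantum Pieri identity $\tau_{(1,1)}\star\tau_{(\lambda_1-1,\lambda_2-1)}=\tau_\lambda$ together with Condition (**) to get $a_\mu\geq 0$, and then combining with Lemma \ref{3:lem} to conclude $a_\mu=0$. Your write-up is in fact slightly more careful than the paper's, since you explicitly verify that $(\lambda_1-1,\lambda_2-1)\in\Lambda$ and that the first case of the $\tau_{(1,1)}$ Pieri rule applies.
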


\begin{proof}
We proceed by induction. Suppose $\tau_\lambda=\sigma_\lambda$ for all $|\lambda| \leq s$ where $s \geq 2n$. Consider $|\lambda|=s+1.$ Since $|\lambda| \geq 2n+1$ for $|\lambda|=s+1$, and by an application of the quantum Pieri rule, we have that $\tau_{(1,1)} \star \tau_{(\lambda_1-1,\lambda_2-1)}=\tau_\lambda.$ Observe that $\tau_{(\lambda_1-1,\lambda_2-1)}=\sigma_{(\lambda_1-1,\lambda_2-1)}$ by the inductive hypothesis. Then \[ \sigma_{(1,1)}\star \sigma_{(\lambda_1-1,\lambda_2-1)}=\tau_{(1,1)} \star \tau_{(\lambda_1-1,\lambda_2-1)}=\tau_\lambda=\sigma_\lambda+\sum_{|\mu|+2n=|\lambda|} a_{\mu}q \sigma_{\mu}.\] So, $a_\mu \geq 0$ by Condition (**). By Lemma \ref{3:lem}, either $a_\mu \leq 0$ or there is a $\mu'$ such that $a_\mu+a_{\mu'} \leq 0$. In either case, this implies $a_\mu=0$ since $a_\mu \geq 0$ and $a_{\mu'} \geq 0$. The result follows.
\end{proof}


\bibliographystyle{halpha}
\bibliography{bibliography}

@article{pech:quantum,
   author={Pech, Clelia},
   title="{Quantum cohomology of the odd symplectic Grassmannian of lines}",
   journal="{Journal of Algebra}",
   volume="{375}",
   date="{2013}",
   pages="{188--215}",
   issn="{0021-8693}",
   doi="{10.1016/j.jalgebra.2012.11.010}"
}

@article {mihalcea.shifler:qhodd,
    AUTHOR = {Mihalcea, Leonardo C. and Shifler, Ryan M.},
     TITLE = {Equivariant quantum cohomology of the odd symplectic
              {G}rassmannian},
   JOURNAL = {Mathematische Zeitschrift},
  FJOURNAL = {Mathematische Zeitschrift},
    VOLUME = {291},
      YEAR = {2019},
    NUMBER = {3-4},
     PAGES = {1569--1603},
      ISSN = {0025-5874},
   MRCLASS = {14N35 (14M15 14N15)},
       DOI = {10.1007/s00209-018-2120-3},
       URL = {https://doi.org/10.1007/s00209-018-2120-3},
}

@article{GPPS,
author={Gonzalez, Richard and Pech, Cl{\'e}lia and Perrin, Nicolas and Samokhin, Alexander},
title = {Geometry of horospherical varieties of {P}icard rank one},
JOURNAL = {International Mathematics Research Notices},
year={2019},
FJOURNAL = {arxiv1803.05063},
}

@article {buch.Pos.Grass,
 AUTHOR = {Buch, Anders S. and Wang, Chengxi},
     TITLE = {{P}ositivity determines the quantum cohomology of {G}rassmannians},
   JOURNAL = {Algebra and Number Theory},
  FJOURNAL = {Algebra and Number Theory},
    VOLUME = {15},
      YEAR = {2021},
    NUMBER = {6},
     PAGES = {1505--1521},
}

@InProceedings{PosIG,
author="Li, Yan
and Li, Zhenye",
editor="Cheltsov, Ivan
and Chen, Xiuxiong
and Katzarkov, Ludmil
and Park, Jihun",
title="On a Conjecture of {F}ulton on Isotropic {G}rassmannians",
booktitle="Birational {G}eometry, {K}{\"a}hler--{E}instein {M}etrics and {D}egenerations",
year="2023",
publisher="Springer International Publishing",
address="Cham",
pages="567--580",
abstract="In this note, we confirm a positivity conjecture of Fulton (Conjecture 1 below) for isotropic Grassmannians {\$}{\$}{\backslash}text {\{}IGr{\}}(2,2n){\$}{\$}IGr(2,2n)with {\$}{\$}n{\backslash}ge 3{\$}{\$}n≥3. Namely, the quantum deformation of the basis, formed by the Schubert cycles, is trivial. To the author's knowledge, this is the first time that the conjecture holds for a Grassmannian of type {\$}{\$}C{\_}n{\$}{\$}Cn.",
isbn="978-3-031-17859-7"
}

\end{document}